\newtheorem{theorem}{Theorem}[section]
\newtheorem{prop}[theorem]{Proposition}
\newtheorem{cor}[theorem]{Corollary}
\newtheorem{lem}[theorem]{Lemma}
\theoremstyle{definition}
\begin{document}


%
%
\renewcommand{\subjclassname}{
\textup{2020} Mathematics Subject Classification}\subjclass[2020]{Primary 
57K10, 
57M15. 
}

\date{\today}
\keywords{Knot diagram; complementary region; $4$-valent graph}


\title[Triangles and quadrilaterals]{
Any link has a diagram with only triangles and quadrilaterals
}


\author{Reiko Shinjo}
\address{School of Science and Engineering, 
Kokushikan University, 4-28-1, Setagaya, Setagaya-ku, Tokyo 154-8515, Japan}
\email{reiko@kokushikan.ac.jp}

\author{Kokoro Tanaka}
\address{Department of Mathematics, Tokyo Gakugei University, 
Nukuikita 4-1-1, Koganei, Tokyo 184-8501, Japan}
\email{kotanaka@u-gakugei.ac.jp}


\dedicatory{Dedicated to Professor Kouki Taniyama on the occasion of his 60th birthday}

\begin{abstract}
A link diagram can be considered as a $4$-valent graph embedded in the $2$-sphere 
and divides the sphere into complementary regions. 
In this paper, we show that any link has a diagram with only triangles and quadrilaterals. 
This extends previous results shown by the authors and C.~Adams. 
%
\end{abstract}

\maketitle



\section{Introduction}\label{sec:intro}

A link diagram can be considered as a $4$-valent graph embedded in the $2$-sphere 
and divides the sphere into complementary regions. 
Given a reduced connected diagram $D$, 
let 
$p_n$ 
(or more precisely $p_n(D)$)    
be the number of $n$-gons of all the complementary regions of $D$ for each $n \geq 1$. 
Note that 
$p_1 = 0$
since $D$ is reduced. 
It follows from Euler's formula and some elementary observations that 
we have the following equation
\begin{equation}\label{eq:Euler}
2 p_2 + p_3 = 8 + p_5 + 2 p_6 + 3 p_7 + \cdots ,
\end{equation}
in which 
$p_4$
does not appear; see for example \cite{AST}. 

In graph theory, the converse direction has been investigated, dating back to \cite{Eberhard}.
Gr\"{u}nbaum \cite{Grunbaum} proved that any sequence $\{p_n\}_{n \geq 2, n \neq 4}$ 
of nonnegative integers with $p_2=0$ that satisfies Equation~\eqref{eq:Euler} can be realized 
as a planar $4$-valent $3$-connected graph such that  
the number of its $n$-gon regions is $p_n$ for all $n \neq 4$. 
This theorem is known as Eberhard's theorem. 
We note that the condition $p_2=0$ is a typical assumption in graph theory, 
since the $1$-skeletons of convex polytopes are of interest.  
We also note that a convex $3$-polytopal $4$-valent graph is nothing but 
a connected reduced link projection without bigons. 
Then Jeong \cite{Jeong} extended this result to show that the resulting graph can be taken as 
a knot projection rather than a link projection. 

In this paper, we investigate such a problem for link diagrams; 
which sequence $\{p_n\}_{n \geq 2, n \neq 4}$ 
of nonnegative integers that satisfies Equation~\eqref{eq:Euler} can be realized 
as a diagram of every link such that the number of its $n$-gon regions is $p_n$ 
for all $n \neq 4$? 
This is a continuation of the study of complementary regions of knot and link diagrams 
by the authors and Adams in \cite{AST}. 
In that paper, we introduced and investigated the notion of universal sequences for knots and links, 
which will be reviewed and discussed in Section~\ref{sec:universal} of this paper.  
See the original paper \cite{AST} or the survey \cite[Chapter 10]{Adams} for more details 
about universal sequences. 
The main purpose of this paper is to show the following theorem and its corollary. 

\begin{theorem}\label{thm:main}
For any sequence $\{ p_n \}_{n \geq 2, n \neq 4}$ of nonnegative integers 
with $p_2 = 0$ that satisfies Equation~\eqref{eq:Euler} 
and any link $L$, there exists 
a diagram $D_L$ of $L$ such that $p_n(D_L) = p_n$ for all $n \neq 4$.  
%
\end{theorem}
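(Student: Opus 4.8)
The plan is to exploit the fact that the prescribed data omit $p_4$ entirely: the sequence is indexed by $n \neq 4$, so the quadrilateral count of the diagram we build is completely unconstrained and can serve as a reservoir that absorbs all of the link's complexity and all incidental regions produced by our constructions. Concretely, for any reduced connected diagram the identity \eqref{eq:Euler} is equivalent to the statement that
\[
I(D) := 2p_2(D) + p_3(D) - \sum_{n \geq 5}(n-4)\,p_n(D) = 8
\]
is a constant, independent of $D$. Hence, as soon as a modification of a diagram keeps it reduced and connected, keeps $p_2 = 0$, and changes the higher counts $p_5, p_6, \dots$ in a prescribed way, the triangle count $p_3$ is forced to adjust by exactly the corresponding amount. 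This removes all need to count triangles by hand: I only ever have to control the bigons and the faces with at least five sides.

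Given this, I would split the proof into a base construction and an insertion step. For the base construction I would prove the statement of the Corollary, that every link $L$ admits a diagram all of whose complementary regions are triangles or quadrilaterals; for such a diagram $p_2 = 0$ and $p_n = 0$ for all $n \geq 5$, so $I(D) = 8$ forces $p_3 = 8$, and this realizes the minimal sequence with $p_n = 0$ for every $n \geq 5$. I would obtain it from an arbitrary diagram $D_0$ of $L$ by repeatedly applying two local, link-type-preserving moves---one that resolves a bigon and one that subdivides an $n$-gon with $n \geq 5$ into faces with strictly fewer sides---each realized as the insertion into a small disk of a tangle isotopic rel boundary to a trivial tangle (so that the link type is unchanged), and each designed to create only triangles and quadrilaterals in the disk. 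Using the complexity $c(D) = p_2(D) + \sum_{n \geq 5}(n-4)p_n(D)$, which is a nonnegative integer that each move strictly decreases, the process terminates at a triangle--quadrilateral diagram of $L$.

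With the base diagram in hand, the general sequence is reached by installing the higher polygons one at a time. For each $n \geq 5$ I would construct a gadget $G_n$: a replacement, inside a single quadrilateral adjacent to the link, of a trivial tangle by an isotopic-rel-boundary tangle whose insertion creates exactly one new $n$-gon, leaves $p_2 = 0$, and creates no other face with five or more sides, all further new faces being quadrilaterals. By the invariance of $I(D)$ above, such an insertion automatically raises $p_3$ by exactly $n-4$ and leaves $p_m$ unchanged for $5 \leq m \neq n$. Starting from the base diagram and applying $G_n$ exactly $p_n$ times for each $n \geq 5$---a quadrilateral in which to apply the next gadget is always available, since every gadget leaves quadrilaterals behind---then yields a diagram with the prescribed $p_n$ for all $n \geq 5$ and with $p_3 = 8 + \sum_{n \geq 5}(n-4)p_n$, which equals the target value of $p_3$ by \eqref{eq:Euler} with $p_2 = 0$. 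The value of $p_4$ is whatever it turns out to be, which is exactly what the theorem permits; taking all $p_n = 0$ for $n \geq 5$ recovers the Corollary.

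The hard part will be the explicit design and verification of these local gadgets. Two properties must be checked for each: first, link invariance, i.e.\ that the inserted tangle really is isotopic rel boundary to the trivial tangle, so that no Reidemeister obstruction is introduced; and second, the precise bookkeeping of the face types created when the gadget is glued into an arbitrary surrounding triangle--quadrilateral diagram---in particular that no stray bigon or unintended large face ever appears along the boundary of the insertion disk, and that no monogon or nugatory crossing is created, so that reducedness and connectivity, hence the identity for $I(D)$, are preserved throughout. Verifying that the two base-case moves can always be arranged to strictly decrease $c(D)$ while respecting these same constraints is the other delicate point, and is where most of the figure-level casework will go.
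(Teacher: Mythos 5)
Your Euler bookkeeping is correct---for any reduced connected diagram the quantity $I(D) = 2p_2(D) + p_3(D) - \sum_{n \geq 5}(n-4)\,p_n(D)$ equals $8$, so triangle counts are indeed forced once bigons and large faces are controlled---and your reduction to ``base triangle--quadrilateral diagram plus one gadget per prescribed $n$-gon'' is a sensible skeleton. But the proof has a genuine gap: everything hard is deferred to the two postulated families of local moves, and for the base construction there is concrete reason to doubt such moves exist. A tangle replacement supported in a disk whose boundary meets the diagram in $k$ points necessarily adds sides to the faces adjacent to the disk from the outside, so a move that resolves a bigon or subdivides an $n$-gon tends to push complexity into neighboring faces rather than destroy it; nothing in your proposal rules out that $c(D)$ increases nearby when it decreases inside the disk. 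Worse, the natural way to manufacture new small faces from a single strand is a Reidemeister~I kink, which creates a monogon, and a monogon can only be converted into admissible faces by threading \emph{other} strands through it---an inherently global operation, not a tangle replacement in one disk. This is exactly why the paper does not argue locally: it first realizes the prescribed face vector as a knot projection via Jeong's extension of Eberhard's theorem \cite{Jeong} (with a special three-strand part as in Figure~\ref{fig:2}), then transplants an arbitrary link into that template by presenting $L$ as a closed quasitoric braid \cite{Manturov}, running it along $N$ parallel copies of the projection, transporting the correcting kinks along the parallel strands so that all incidental faces are quadrilaterals, and controlling writhe via Lemma~\ref{lem:writhe}.

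Note also that your ``base construction'' is precisely Corollary~\ref{cor:(3,4)}, i.e.\ the main new content of the paper: the earlier constructions of \cite{AST} in this local style only achieved sequences such as $(3,4,n)$ for $n \geq 5$ and $(2,4,5)$, seemingly because at least one large face could not be eliminated by local surgery; and the non-universality results there (e.g.\ $(n,2n,3n,\ldots)$ is not universal) show that face counts genuinely interact with the link type, so a scheme that never engages a global presentation of $L$ (such as a braid closure) should be treated with suspicion. Your gadget step $G_n$ is more plausible---insertions of this flavor appear in \cite{AST}---and the Euler-forcing of the triangle count is legitimate once a gadget's face inventory is verified; but as written, both the existence of the moves and the strict decrease of $c(D)$ are assertions rather than proofs, and since the base case is the crux of the theorem, the argument does not yet establish the statement.
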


Since the sequence $p_2=0$, $p_3=8$ and $p_n=0$ ($n \geq 5$) 
satisfies the assumption of 
Theorem~\ref{thm:main}, 
we have the following, which is the title of this paper: 

\begin{cor}\label{cor:(3,4)} 
Any link has a diagram 
with only eight triangles and quadrilaterals. 
\end{cor}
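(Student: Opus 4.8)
The plan is to deduce the corollary directly from Theorem~\ref{thm:main} by applying it to a single well-chosen sequence. First I would set $p_3 = 8$, $p_2 = 0$, and $p_n = 0$ for every $n \geq 5$, leaving $p_4$ unspecified. A diagram realizing this sequence has no bigons, exactly eight triangles, and no $n$-gons for $n \geq 5$; its only remaining regions are quadrilaterals, whose number is unconstrained because $p_4$ does not appear in Equation~\eqref{eq:Euler}. Hence such a diagram consists of only triangles and quadrilaterals, with precisely eight triangles, which is exactly the assertion of the corollary.

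Next I would verify that this sequence satisfies the two hypotheses of Theorem~\ref{thm:main}. The condition $p_2 = 0$ holds by construction. For Equation~\eqref{eq:Euler}, the left-hand side is $2 p_2 + p_3 = 0 + 8 = 8$, while the right-hand side is $8 + \sum_{n \geq 5}(n-4) p_n = 8 + 0 = 8$, so the equation holds. With both hypotheses confirmed, Theorem~\ref{thm:main} applies to an arbitrary link $L$ and yields a diagram $D_L$ of $L$ with $p_n(D_L) = p_n$ for all $n \neq 4$, which completes the argument.

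There is essentially no obstacle in this step: all of the difficulty has been absorbed into Theorem~\ref{thm:main}, and the corollary is obtained merely by specializing that theorem to one admissible sequence. The only point that requires care is the elementary bookkeeping that translates the phrase ``only triangles and quadrilaterals'' into the vanishing of $p_n$ for $n = 2$ and for all $n \geq 5$, together with the value $p_3 = 8$ that is then forced by Equation~\eqref{eq:Euler}.
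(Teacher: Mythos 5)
Your proof is correct and is essentially identical to the paper's own argument: the paper likewise deduces the corollary by specializing Theorem~\ref{thm:main} to the sequence $p_2=0$, $p_3=8$, $p_n=0$ $(n\geq 5)$, which satisfies Equation~\eqref{eq:Euler} since $2p_2+p_3=8$. Your explicit verification of the hypotheses and the bookkeeping translating the sequence into ``only eight triangles and quadrilaterals'' matches what the paper leaves implicit.
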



Theorem~\ref{thm:main} follows from Theorem~\ref{thm:template},  
whose proof will be given in Section~\ref{sec:template}. 
 
\begin{theorem}\label{thm:template}
Let $P$ be a knot projection with the part as shown in Figure~\ref{fig:2}, 
where the numbers from $0$ to $2$ indicate the order in which the arcs are traced. 
Then any link $L$ has a diagram $D_L$ such that 
$p_n(D_L) = p_n(P)$ for all $n \neq 4$. 
\end{theorem}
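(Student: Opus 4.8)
The plan is to build $D_L$ from the template projection $P$ by a single local surgery confined to a disk $\Delta$ containing the distinguished part of Figure~\ref{fig:2}, leaving $P$ untouched outside $\Delta$. Two features of the setup make this reduction powerful. First, since $P$ is only a \emph{projection}, a diagram supported on a given shadow is free to realise \emph{any} assignment of over/under information at the crossings; and since the region counts $p_n$ depend only on the underlying $4$-valent graph (the shadow) and not on this crossing information, the equality $p_n(D_L)=p_n(P)$ for $n\neq 4$ is a purely combinatorial statement about shadows. Second, because $D_L$ agrees with $P$ outside $\Delta$, every complementary region of $P$ disjoint from $\Delta$ persists unchanged in $D_L$, so only the regions meeting $\Delta$ have to be controlled. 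Thus the theorem splits into two tasks: (i) arrange the shadow inside $\Delta$ so that the surgery creates and destroys \emph{only} quadrilaterals, and (ii) choose the crossing information so that the resulting diagram is genuinely a diagram of $L$.

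For the second task I would proceed as follows. Fix any diagram $D$ of $L$ and single out one of its components; cutting that component at a point turns $D$ into a $(1,1)$-tangle $D'$ whose remaining components are closed. The arcs labelled $0,1,2$ in Figure~\ref{fig:2} prescribe exactly where and in which traversal order the two ends of $D'$ are to be attached to the strand of $P$; splicing $D'$ in this way produces a connected reduced shadow, and the total number of components is that of $L$ (the closed components of $D'$, together with the single component reformed from the opened one and the strand of $P$). The strand of $P$ lying outside $\Delta$ now carries only self-crossings, and by choosing these so that it forms a trivial unknotted arc running close to $\partial\Delta$ while keeping inside $\Delta$ the crossings that present $D$, one obtains a diagram whose link type is precisely $L$. (That an arc with prescribed endpoints can be made trivial by a suitable choice of its self-crossings is elementary.)

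The heart of the argument is the first task, and this is where I expect the main obstacle to lie. I would first replace $D$ by a diagram of $L$ in a \emph{quadrangulated normal form}: a diagram, obtained from an arbitrary one by Reidemeister moves, all of whose bounded complementary regions are quadrilaterals. The natural way to reach such a form is to start from any diagram and repeatedly subdivide each region by pushing strands across one another (Reidemeister~II-type finger moves), each move cutting an $m$-gon into a quadrilateral and a smaller region while introducing no region of size $\neq 4$. One then arranges $D'$ and its docking onto the arcs $0,1,2$ so that the few regions straddling $\partial\Delta$, where the inserted tangle meets the strand of $P$, are themselves quadrilaterals, by padding the interface with a collar of parallel strands. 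Granting this, every region created inside $\Delta$ is a $4$-gon and every region of $P$ incident to $\Delta$ is either unchanged or merely has quadrilaterals split off from it, so $p_n$ is unaffected for all $n\neq 4$ while $p_4$ is free to change, exactly as permitted by \eqref{eq:Euler} and the statement of Theorem~\ref{thm:template}.

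The decisive difficulty is therefore the compatibility of the quadrangulation with the fixed docking geometry of Figure~\ref{fig:2}: realising an \emph{arbitrary} link $L$ forces arbitrarily complicated tangles into $\Delta$, yet every one of their faces, and in particular every face meeting the boundary arcs $0,1,2$, must come out as a quadrilateral without perturbing the pre-existing non-quadrilateral regions of $P$. I expect the proof to hinge on a carefully chosen normal form for the inserted tangle, most plausibly a brick-wall or grid arrangement of the crossings of $D$ together with a standardised boundary collar matched to the three arcs of Figure~\ref{fig:2}, so that the entire modification is transparently a union of quadrilaterals glued to the template.
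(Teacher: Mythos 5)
There is a genuine gap, and it sits exactly where you locate it yourself: the ``first task'' is the entire content of the theorem, and the one concrete technique you propose for it is impossible. A \emph{quadrangulated normal form} --- a diagram all of whose bounded complementary regions are quadrilaterals --- does not exist: Equation~\eqref{eq:Euler} gives $2p_2+p_3=8+p_5+2p_6+\cdots\geq 8$ for \emph{every} reduced connected diagram, with the count taken over all regions of $S^2$, so the non-quadrilateral regions can never be pushed into a single outer region. The local mechanism you invoke is also wrong: a Reidemeister~II finger move always creates a bigon at the finger tip and adds two edges to the region the tip enters, so it never ``cuts an $m$-gon into a quadrilateral and a smaller region while introducing no region of size $\neq 4$.'' More structurally, a surgery confined to a disk $\Delta$ that meets $P$ in a single spliced strand cannot succeed at all: any nontrivial $(1,1)$-tangle inserted into a strand necessarily puts crossings on the boundary of the two regions of $P$ flanking that strand, increasing their edge counts, and these regions are among the $n$-gons whose sizes must be preserved. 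The defect created by the insertion cannot be absorbed inside $\Delta$; it has to be \emph{transported around the diagram}. (This is also why your plan barely uses the hypothesis: a one-strand splice would work on any projection, making the three-strand traced-in-order condition of Figure~\ref{fig:2} vacuous --- a warning sign, since the paper uses that condition essentially.)

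The paper's proof is correspondingly global rather than local. Lemma~\ref{lem:2toN} replaces \emph{all} of $P$ by $N$ parallel copies --- parallelism creates only quadrilaterals --- and reconnects the copies into a single knot projection with an $(N+1)$-strand cabled part, using the traversal order of Figure~\ref{fig:2}; the two residual defects (a region with one extra edge plus an adjacent spurious triangle) are repaired by creating kinks via Reidemeister~I moves and threading them along the entire diagram parallel to existing strands, which again creates only quadrilaterals en route while cancelling the defects at the ends. Proposition~\ref{prop:template} then realizes $L$ as a closed \emph{quasitoric} braid (Manturov: every link is such a closure) inserted into the cabled region, so that the braid grid contributes only quadrilaterals apart from $2q$ controlled defects, which are repaired by threading $2q$ kinks around the diagram in a prescribed pairing and order; the crossing information on the kinks is chosen so they can be shrunk back, hence the link type is unchanged. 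Finally, the link type of the satellite-like alignment is controlled by Lemma~\ref{lem:writhe} (an unknot diagram with writhe $0$ or $+1$ on any shadow), the writhe-$+1$ case being compensated by multiplying the braid by the quasitoric full twist $(\sigma_1\cdots\sigma_{p-1})^{-p}$ --- a framing subtlety your splice formulation happens to avoid, but whose analogue (the defect-transport mechanism) your proposal does not supply. Without a replacement for that mechanism, the proposal does not prove the theorem.
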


\begin{figure}[thbp]
\includegraphics[width=0.40\textwidth, pagebox=artbox]{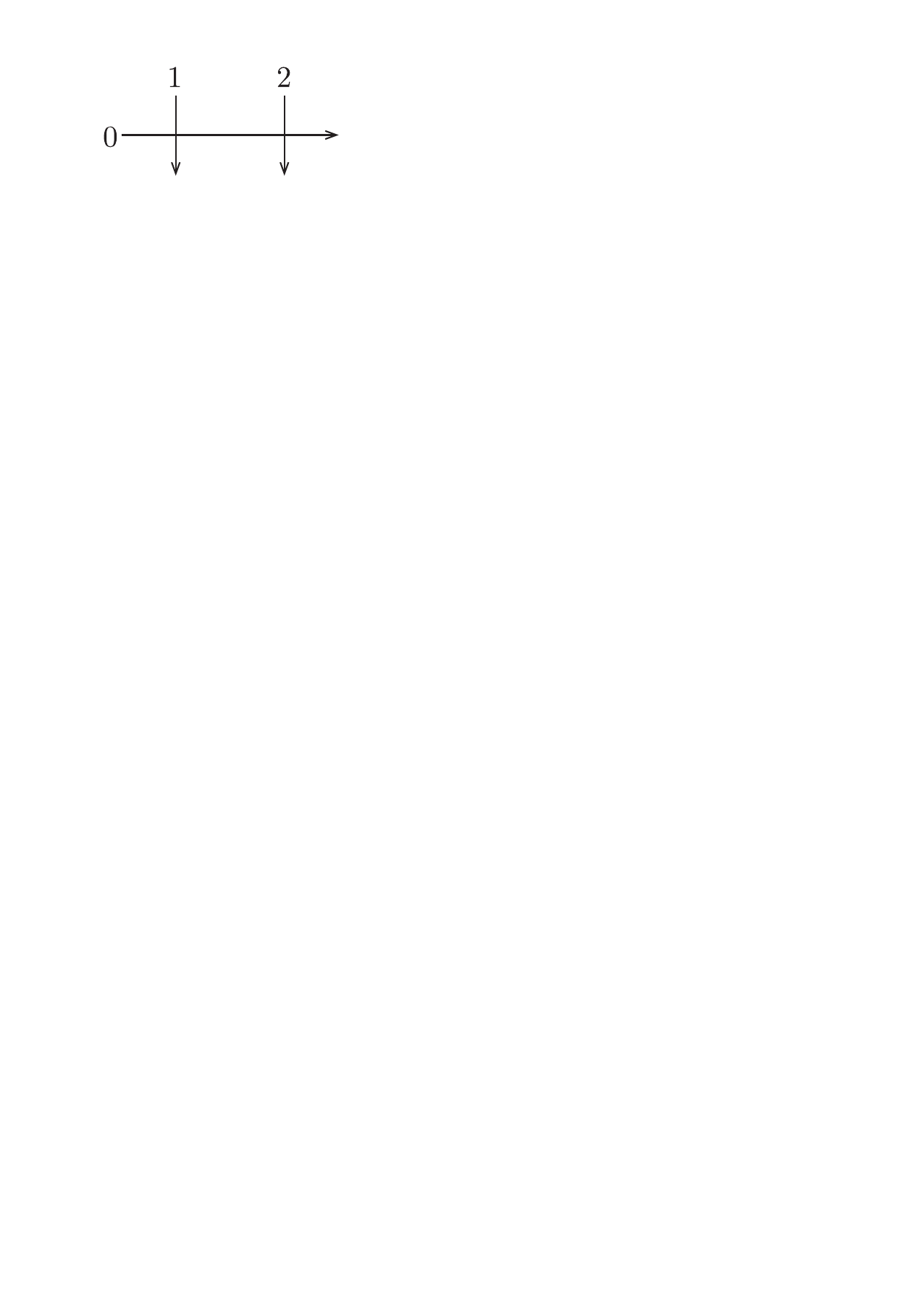}
\caption{A part of a knot projection consisting of three strands}\label{fig:2}
\end{figure}

\begin{proof}[Proof of Theorem~\ref{thm:main}]
Let $\{ p_n \}_{n \geq 2, n \neq 4}$ be a sequence of 
nonnegative integers with $p_2 = 0$ that satisfies Equation~\eqref{eq:Euler}.  
It was shown in \cite{Jeong} that 
there exists a choice of $p_4$ and a knot projection $P$ such that 
$p_n(P) = p_n$ for all $n \geq 2$. 
The proof was inductive and constructive. 
We recall here a rough outline of the proof in \cite{Jeong}.   
Start with the knot projection $P_0$ in Figure~\ref{fig:Jeong}. 
It is made up of eight triangles and three quadrilaterals, and 
has the part as shown in Figure~\ref{fig:2}; see the right of Figure~\ref{fig:Jeong}. 
By performing some local operations for $P_0$ repeatedly outside the part as shown in Figure~\ref{fig:2}, 
the knot projection $P_0$ can be changed into the desired knot projection $P$, which 
also has the the part as shown in Figure~\ref{fig:2}.  
Thus Theorem~\ref{thm:template} can be applied for $P$ 
and hence any link $L$ has a diagram $D_L$ such that $p_n(D_L) = p_n(P) = p_n$ for all $n \neq 4$. 
\end{proof}

\begin{figure}[thbp]
\includegraphics[width=0.85\textwidth, pagebox=artbox]{}
\caption{Jeong's knot projection $P_0$}\label{fig:Jeong}
\end{figure}

Although some readers may wonder if it is possible to remove the assumption 
$p_2=0$ from Theorem~\ref{thm:main}, we think that it is an issue in the future. 
This is because, in order to use Theorem~\ref{thm:template}, 
we have to extend the result by Jeong \cite{Jeong} without the assumption $p_2=0$, 
however, it may be difficult at this time.  
Instead, for example, it is possible to prove the following 
for the sequence $p_2=2, p_3=4$ and $p_n=0$ ($n \geq 5$) 
which satisfies Equation~\eqref{eq:Euler}. 

\begin{prop}
Any link has a diagram 
with only two bigons, four triangles and quadrilaterals. 
\end{prop}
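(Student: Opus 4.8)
The plan is to reduce the Proposition to Theorem~\ref{thm:template} in the same way that Theorem~\ref{thm:main} was reduced to it, the only difference being that the required knot projection must be supplied directly rather than through Jeong's construction. First I would note that the sequence $p_2 = 2$, $p_3 = 4$, $p_n = 0$ $(n \geq 5)$ satisfies Equation~\eqref{eq:Euler}, since $2p_2 + p_3 = 4 + 4 = 8$. By Theorem~\ref{thm:template} it then suffices to exhibit a single knot projection $P$ that contains the three-strand part shown in Figure~\ref{fig:2} and satisfies $p_2(P) = 2$, $p_3(P) = 4$ and $p_n(P) = 0$ for all $n \geq 5$; the value of $p_4(P)$ is immaterial. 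Indeed, applying Theorem~\ref{thm:template} to such a $P$ produces, for every link $L$, a diagram $D_L$ with $p_n(D_L) = p_n(P)$ for all $n \neq 4$, that is, a diagram whose only complementary regions are two bigons, four triangles and quadrilaterals.

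The reason a separate argument is needed, instead of a direct appeal to Theorem~\ref{thm:main}, is that the proof of Theorem~\ref{thm:main} passes through the theorem of Jeong~\cite{Jeong}, which assumes $p_2 = 0$ and hence cannot produce a projection with bigons. To invoke Theorem~\ref{thm:template}, however, the full strength of Jeong's theorem is unnecessary: we need only one explicit projection, with the prescribed and very small region counts, that also carries the template part. The natural way to build it is to begin with Jeong's projection $P_0$ of Figure~\ref{fig:Jeong}, which already contains the part of Figure~\ref{fig:2} and has $(p_2, p_3, p_4) = (0, 8, 3)$, and to perform a local modification, carried out entirely outside the template part, that creates exactly two bigons at the expense of four of the triangles while absorbing the remainder into quadrilaterals. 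Each bigon can be introduced by a clasp (two strands meeting at two crossings) inserted into a suitable region, the crossings being chosen so that no region of size $\geq 5$ is ever created.

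The step I expect to be the main obstacle is precisely this explicit construction together with its verification. One must draw the modified projection so that, simultaneously, the template part of Figure~\ref{fig:2} survives untouched, the resulting region counts are exactly $p_2 = 2$, $p_3 = 4$ and $p_n = 0$ $(n \geq 5)$, and the underlying $4$-valent graph still traces out a single component, so that $P$ is genuinely a knot (and not a link) projection as required by Theorem~\ref{thm:template}. Once a figure realizing all of these constraints is in hand, checking the region counts is a finite, routine inspection, and the Proposition follows at once from Theorem~\ref{thm:template}.
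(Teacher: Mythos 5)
Your reduction is exactly the one the paper uses: it, too, proves the Proposition by exhibiting a single explicit knot projection that contains the part of Figure~\ref{fig:2} and has $p_2=2$, $p_3=4$, $p_n=0$ for $n\geq 5$, and then invoking Theorem~\ref{thm:template}; and you correctly observe that $p_4$ is immaterial, that the projection must be a knot (not link) projection, and that Theorem~\ref{thm:main} cannot be cited directly because Jeong's theorem assumes $p_2=0$. The problem is that your proof stops short at exactly the step that constitutes the entire content of the argument: you never actually produce the projection. The paper does --- it takes the projection $P_1$ shown on the left of Figure~\ref{fig:(2,4,odd)}, i.e.\ the $k=1$ member of the family $P_k$ with $p_2(P_k)=4k-2$ and $p_{2k+1}(P_k)=4$, for which the region counts $p_2=2$, $p_3=4$, $p_n=0$ ($n\geq 4$) and the presence of the template part are verified by direct inspection of the figure.

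Your sketched route for filling this hole (clasp insertions into Jeong's $P_0$) is not yet an argument, and its bookkeeping is less innocent than you suggest. Inserting a clasp into a region does more than create one bigon and split that region: the two new double points subdivide edges of the two strands being clasped, so the regions on the \emph{far} side of those edges each gain an edge; in particular, any quadrilateral adjacent to a clasped edge becomes a pentagon, which is precisely the kind of region you must never create, and $P_0$ already has three quadrilaterals among its $(p_2,p_3,p_4)=(0,8,3)$ regions. Also note that these are projections, so there is no crossing information to ``choose''; only the placement of the clasps is available, and whether placements exist that simultaneously destroy four triangles, create exactly two bigons, keep all other regions at size at most $4$, preserve connectivity as a single component, and avoid the template part is exactly the claim to be proved. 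The gap is certainly fillable --- the paper's $P_1$ witnesses that such a projection exists --- but as written your proof defers its one nontrivial step to a construction you acknowledge you have not carried out.
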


\begin{proof}
Take a knot projection $P_1$ as in the left of Figure~\ref{fig:(2,4,odd)} 
so that 
$p_2(P_1) = 2$, $p_3(P_1)=4$ and $p_n(P_1) = 0 $ for all $n \geq 4$. 
Since  $P_1$ has the part as shown in Figure~\ref{fig:2},  
Theorem~\ref{thm:template} can be applied for the projection $P_1$, 
and hence any link $L$ has a diagram $D_L$ such that $p_n(D_L) = p_n(P_1)$ for all $n \neq 4$, 
that is, $p_2(D_L) = 2$, $p_3(D_L)=4$ and $p_n(D_L) = 0 $ for all $n \geq 5$. 
\end{proof}

Throughout this paper, we use the fact that any diagram can be made a diagram 
of the unknot by crossing changes. 
Equivalently, we can make any knot projection into a diagram of the unknot 
by giving crossing information appropriately. 
In fact, we can say a little stronger assertion, which will be used later.

\begin{lem}\label{lem:writhe}
Let $Q$ be a knot projection. 
If the number of crossings of $Q$ is even $($resp. odd$)$, 
then there exists a diagram $D_Q$ of the unknot whose underlying projection is $Q$ 
and whose writhe is $0$ $($resp. $+1$$)$. 
\end{lem}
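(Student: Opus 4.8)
The plan is to realize trivializing crossing assignments by means of \emph{unimodal height functions} and to slide the maximum once around the diagram, using a discrete intermediate value argument to land on the prescribed writhe.

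First I would orient $Q$ and fix a base point $s$ on it that is not a crossing. For each point $m \neq s$ of $Q$ I would choose a height function $h_m$ on the underlying circle of $Q$ attaining its minimum at $s$ and its unique maximum at $m$, strictly monotone on each of the two arcs into which $s$ and $m$ cut the circle. Lifting $Q$ into $\R^3$ by $h_m$ and letting the strand of larger height pass over at every crossing produces a diagram $D_m$ whose underlying projection is $Q$. Since the lifted curve has a single maximum and a single minimum of the height, it is unknotted, so every $D_m$ is a diagram of the unknot. As $m$ approaches $s$ from one side the arc carrying the maximum shrinks away, so $D_m$ becomes the descending diagram based at $s$, of some writhe $w$; as $m$ approaches $s$ from the other side, $D_m$ becomes the ascending diagram based at $s$, of writhe $-w$. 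Both writhes are congruent to the number of crossings modulo $2$, since each is a sum of that many signs.

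Next I would track the writhe as $m$ travels once around $Q$. After a small generic perturbation of the base point and of the profiles $h_m$, the height order of the two strands changes at only one crossing at a time, and at each such instant that crossing reverses its over/under information, so its sign flips and the writhe changes by exactly $\pm 2$. Hence $m \mapsto w(D_m)$ is a walk on the integers starting at $w$, ending at $-w$, and moving in steps of $\pm 2$. By the discrete intermediate value property such a walk attains every integer of the correct parity between $-|w|$ and $|w|$; in particular it attains $0$ when the number of crossings is even and both $+1$ and $-1$ when it is odd. Choosing $m$ so that $w(D_m)$ equals $0$ (resp. $+1$) and setting $D_Q = D_m$ then gives the desired diagram, the crossingless case being the round unknot of writhe $0$.

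The main obstacle is the geometric input of the first step: one must verify carefully that a closed curve in $\R^3$ whose height has a single maximum and a single minimum is genuinely unknotted, and that as the maximum slides past a single passage of $Q$ only the corresponding crossing changes its over/under information. Once these two points are secured, the parity bookkeeping and the discrete intermediate value step are routine, and they are precisely what upgrades the known fact that $Q$ supports an unknot diagram to the sharp control of the writhe asserted in Lemma~\ref{lem:writhe}.
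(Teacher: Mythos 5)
Your proof is correct, and it takes a genuinely different route from the paper: the paper proves Lemma~\ref{lem:writhe} by induction on the number of crossings (and gives only that one-line sketch), whereas you give a direct, global construction. Your sweep argument is sound: each lift with a single height maximum and a single minimum is a bridge-number-one curve, hence unknotted, so every $D_m$ is an unknot diagram; the two limiting positions of $m$ at the base point give the descending and the ascending diagram based at $s$, which differ by switching every crossing and hence have writhes $w$ and $-w$ with $w$ congruent to the crossing number mod $2$; and a generic sweep changes the writhe by $\pm 2$ at isolated events, so the discrete intermediate value argument lands on $0$ (resp.\ $+1$). One small imprecision is worth flagging: the flip of a given crossing does not happen ``as the maximum slides past a single passage'' of that crossing. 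With the minimum fixed at $s$ and the maximum at $m$, a crossing whose preimages occur at parameters $a<b$ (measured from $s$) has the $a$-strand over for all $m<a$, the $b$-strand over for all $m>b$, and flips at one profile-dependent instant in the open interval $(a,b)$; your genericity hedge covers this, and this local analysis in fact confirms that each crossing flips exactly once per full sweep, consistent with your endpoint computation $w \mapsto -w$. Comparing the two approaches: yours is longer and needs the geometric input that a closed curve with a single height maximum is unknotted, but it is fully explicit, requires no case analysis, and yields more than the lemma asks for --- it realizes \emph{every} writhe of the correct parity in $[-|w|,|w|]$ by an unknot diagram on the fixed projection $Q$; the paper's induction is shorter in principle but is left unproved in the text, so your argument would serve as a complete self-contained substitute.
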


\begin{proof}
We apply induction on the number of crossings.
\end{proof}

\section{Proof of Theorem~\ref{thm:template}}\label{sec:template}

\begin{lem}\label{lem:2toN}
Let $P$ be a knot projection with the part as shown in Figure~\ref{fig:2}, 
where the numbers from $0$ to $2$ in the figure 
indicate the order in which the arcs are traced. 
Then there exists a knot projection $Q$ for any integer $N \geq 3$ 
with the part as shown in Figure~\ref{fig:N} 
such that $p_n(Q) = p_n(P)$ for any $n \neq 4$, 
where the numbers from $0$ to $N$ in the figure 
also indicate the order in which the arcs are traced. 
\end{lem}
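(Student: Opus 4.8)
The plan is to argue by induction on $N$, viewing the three-strand part of Figure~\ref{fig:2} as the base case and obtaining the $N$-strand part of Figure~\ref{fig:N} by splicing in one new strand at a time. For the base case there is nothing to construct: the part in Figure~\ref{fig:N} with the smallest value of $N$ is literally the part in Figure~\ref{fig:2}, so we may take $Q=P$ and the equality $p_n(Q)=p_n(P)$ holds trivially for all $n$. For the inductive step I would assume that a knot projection $Q_N$ with the Figure~\ref{fig:N} part and $p_n(Q_N)=p_n(P)$ for all $n\neq 4$ has already been produced, and then build $Q_{N+1}$ from $Q_N$ by a local move supported entirely inside the displayed part, leaving the rest of $Q_N$ untouched.

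The local move I would use extends the displayed pattern by one more strand: I reroute the arc that currently closes up the pattern so that, after traversing the strand labelled $N$, the knot folds back once more and runs alongside the existing strands as a new strand, which then receives the next label. This extends the trace-order labelling from $0,\dots,N$ to $0,\dots,N+1$. Two points then need checking. First, because the new strand is attached to the rest of the projection only through this reroute, the result is again a single-component knot projection; the trace-order labels are precisely the bookkeeping device that records this, since they certify that the knot passes through the strands consecutively. Second, the new strand runs alongside the existing pattern through the crossings already present, so that the cells it cuts off from the neighbouring band are all quadrilaterals.

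It then remains to compare region censuses. Away from the inserted strand, every complementary region of $Q_{N+1}$ is a region of $Q_N$ with the same number of sides; the regions that previously bordered the last-traced strand are merely pushed outward without any change in their side count; and every newly created region is a quadrilateral. Hence $p_n(Q_{N+1})=p_n(Q_N)=p_n(P)$ for every $n\neq 4$, while only $p_4$ increases, and the induction goes through, establishing the claim for all $N\geq 3$.

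The step I expect to be the main obstacle is the bookkeeping at the two ends of the displayed part, where the new strand turns around and is spliced into the trace. There one must verify that the reroute neither creates nor destroys a triangle or a bigon, and does not merge or split any non-quadrilateral region, so that the net effect on the region census is the addition of quadrilaterals alone. Controlling these turnaround regions carefully---rather than the interior of the band, where the subdivision into quadrilaterals is routine---is exactly where the trace-order labelling of Figure~\ref{fig:2} and Figure~\ref{fig:N} does the real work, both in keeping the projection a single knot and in pinning down which regions change shape.
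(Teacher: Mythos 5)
There is a genuine gap in your inductive step, and it sits exactly at the place you yourself flag as ``the main obstacle'': the turnaround. Your argument never constructs the reroute; it only asserts that a move ``supported entirely inside the displayed part'' can splice a new strand into the trace while creating only quadrilaterals. But the arc that ``closes up the pattern'' is not inside the part at all: in Figure~\ref{fig:2} and Figure~\ref{fig:N} the strands enter and leave the local picture, and the connections realizing the trace order $0,1,\dots,N$ live elsewhere in the projection. So a fold-back after strand $N$ either puts a U-turn cap inside the part (and then the new strand does not traverse the part, so the result is not the pattern of Figure~\ref{fig:N}), or it must reconnect to the continuation of strand $N$ on the far side, which is a non-local modification. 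Moreover, the claim that the splice costs only quadrilaterals is contradicted by the paper's own construction: when the paper performs the analogous re-splicing (top of Figure~\ref{fig:2toN}), it explicitly records two defects --- two regions each gain one edge, and two new triangles appear next to them. There is no reason your one-strand-at-a-time splice avoids the same defects, and your proposal offers no mechanism to repair them; ``pushed outward without any change in side count'' is precisely what fails for the regions adjacent to the turnaround.

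The paper's actual proof is not inductive and is instructive about why locality cannot work. It takes $Q_1$ to be the $N$-parallel copy (cabling) of all of $P$ in one step, which multiplies regions only by quadrilaterals so that $p_n(Q_1)=p_n(P)$ for $n\neq 4$; it then re-splices the cabled strands inside the special part to recover a single knot projection $Q_2$ with the Figure~\ref{fig:N} pattern, accepting the two defects above; and finally it repairs the census by a genuinely global operation: two kinks are created by Reidemeister~I moves along the edges between each enlarged region and its new triangle, and these kinks are then stretched all the way around the projection along the parallel bands (one going out from strand set $0$ and back into $1$, the other out from $2$ and back into $0$), which absorbs the extra edges and converts everything swept over into quadrilaterals. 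If you tried to salvage your induction by performing this kink-sliding repair at each step, the move would no longer be local, you would have to re-verify after every step that the Figure~\ref{fig:N} pattern and the trace order survive the aligned kinks, and you would essentially be redoing the paper's argument $N-2$ times where one cabling suffices. To fix your write-up, you would need to (i) exhibit the splice explicitly, (ii) account for the defect regions it creates rather than assuming they are absent, and (iii) supply a repair mechanism such as the kink alignment --- at which point the induction buys you nothing over the paper's one-shot construction.
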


\begin{figure}[thbp]
\includegraphics[width=0.50\textwidth, pagebox=artbox]{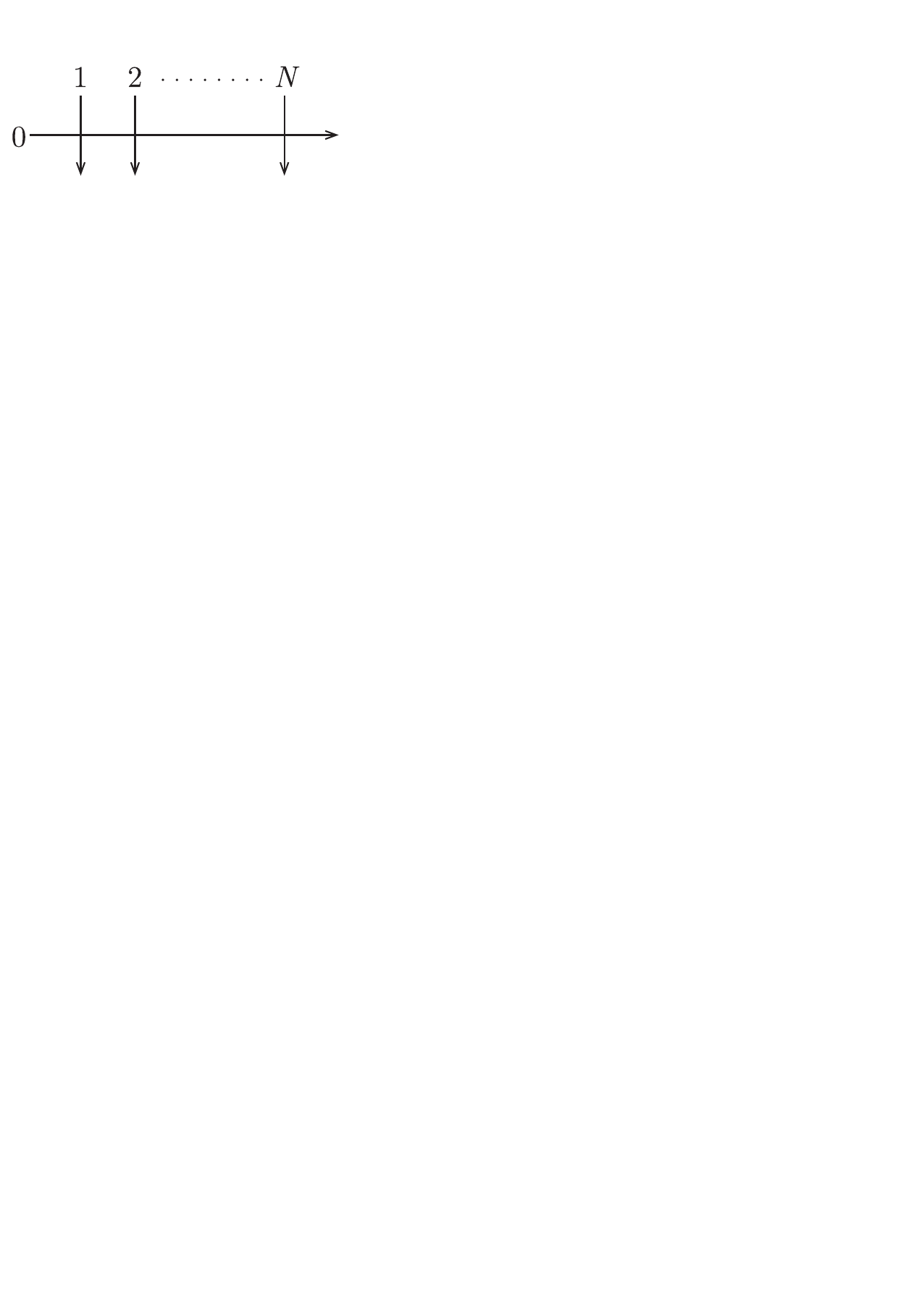}
\caption{A part of a knot projection consisting $(N+1)$-strands}\label{fig:N}
\end{figure}

\begin{proof}
Let $Q_1$ be an $N$-parallel copies of $P$ so that $p_n(Q_1) = p_n(P)$ for all $n \neq 4$. 
%
Let $Q_2$ be a knot projection obtained from $Q_1$ by replacing a part of $Q_1$, 
which corresponds to the $N$-parallel copies of Figure~\ref{fig:N},  
with the part as in the top of Figure~\ref{fig:2toN}, where it depicts the case $N=3$. 
When we ignore quadrilaterals, 
regions of $Q_2$ are almost identical to those of $P$. 
The exact differences are as follows: 
each of the two shaded regions of $Q_2$ in the top of Figure~\ref{fig:2toN} has one more edge than 
the corresponding region in $P$, and has a newly created triangle next to it. 
Roughly speaking, 
let $Q$ be a knot projection obtained from $Q_2$ by creating a pair of two kinks 
as in the middle of Figure~\ref{fig:2toN}, 
and aligning them along with $Q_2$ as in the bottom of Figure~\ref{fig:2toN}. 
The precise procedure making $Q$ from $Q_2$ is as follows.  

Create a pair of two kinks as in the middle of Figure~\ref{fig:2toN} 
such that 
each kink is obtained by a Reidemeister move of type I (without crossing information) 
along the edge between a shaded region and the triangle next to it, 
where the newly created $1$-gon is adjacent to the shaded region. 
Align each of the two kinks along with $Q_2$ as in the bottom of Figure~\ref{fig:2toN} 
such that 
the upper kink goes out from $0$ and returns back into $1$, and that 
the lower kink goes out from $2$ and returns back into $0$, 
where each number from $0$ to $2$ indicates a set of parallel strands of $Q_2$ 
corresponding to the strand labelled by the number in Figure~\ref{fig:N}. 

Then we actually obtain the new knot projection $Q$ from $Q_2$. 
%
%
%
It follows from the construction that 
$p_n(Q) = p_n(P)$ for all $n \neq 4$. 
Moreover the knot projection $Q$ has the part as in Figure~\ref{fig:N}; 
see the bottom right of Figure~\ref{fig:2toN}. 
Hence the knot projection $Q$ is a desired one.
\end{proof}

\begin{figure}[thbp]
\includegraphics[width=1.00\textwidth, pagebox=artbox]{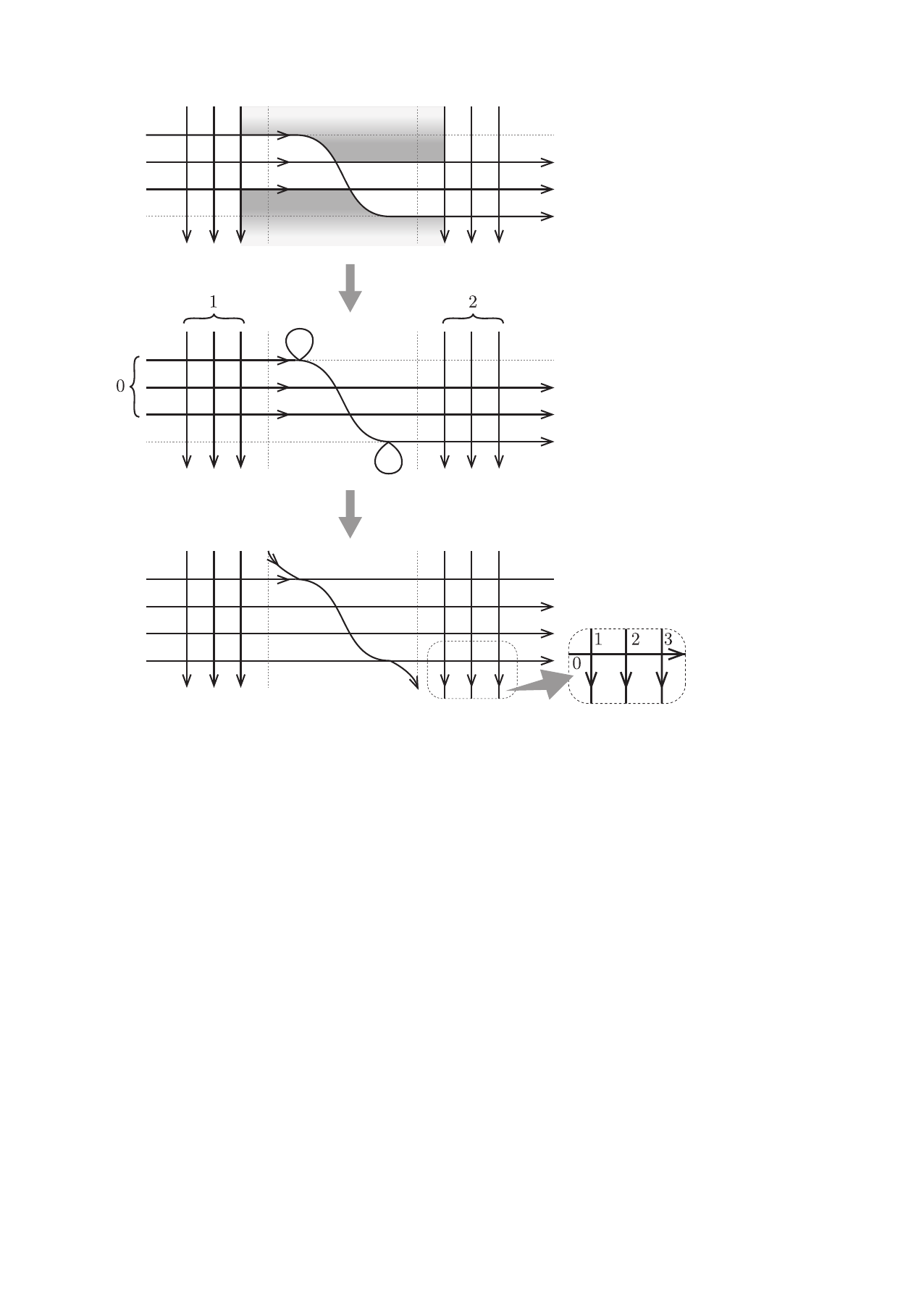}
\caption{How to make $Q$ from $P$ for the case $N=3$}\label{fig:2toN}
\end{figure}

\begin{prop}\label{prop:template}
Let $L$ be a link, and $Q$ 
a knot projection with the part as shown in Figure~\ref{fig:N} 
for sufficiently large $N$ depending on $L$, 
where the numbers from $0$ to $N$ in the figure 
indicate the order in which the arcs are traced. 
Then $L$ has a diagram $D_L$ such that 
$p_n(D_L) = p_n(Q)$ for any $n \neq 4$. 
\end{prop}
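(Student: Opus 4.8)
The plan is to treat the bundle of $N+1$ parallel strands in Figure~\ref{fig:N} as a workspace and to build a diagram of $L$ \emph{entirely inside} this bundle, leaving the rest of the projection $Q$ untouched. Since the outside part of $Q$ is then unchanged, its contribution to each $p_n$ is unchanged, and the whole point becomes to arrange the modifications inside the bundle so that they create \emph{only} quadrilaterals; this preserves $p_n$ for every $n \neq 4$ while letting $p_4$ absorb the new squares. To this end I would first fix a presentation of $L$ as the closure of a braid $\beta$ on some number $m$ of strands (by Alexander's theorem, or equivalently a plat presentation). The hypothesis that $N$ is sufficiently large (depending on $L$) enters precisely here: it guarantees both that $m \le N+1$ and that there is room along the long bundle to install all the letters of $\beta$ disjointly.

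Next I would insert $\beta$ into the bundle so that its closure, taken together with the \emph{fixed} connection pattern of the strands in the rest of $Q$, is precisely $L$; this is achievable for large $N$ because every link is a braid (equivalently plat) closure on sufficiently many strands. Installing a braid naively produces bigons, triangles, and larger polygons, so the key is to install each generator $\sigma_i^{\pm 1}$ in a form that costs only quadrilaterals. Here I would reuse the mechanism of Lemma~\ref{lem:2toN}: each crossing is produced by creating a kink and stretching it along the parallel strands, as in Figure~\ref{fig:2toN}, so that the stretched arc runs parallel to its neighbours and borders them in quadrilaterals only. Spacing the generators out along the bundle keeps these operations disjoint, so the net effect is to add a collection of quadrilaterals while changing which strand connects to which—thereby also fixing the number of components of $L$ through the permutation underlying $\beta$, with no separate component-splitting move needed.

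With the projection in hand I would then assign crossing information. The crossings coming from the letters of $\beta$ are signed so as to give exactly $L$, while any auxiliary crossings introduced by the kink-stretching are rendered nugatory for the knot type using the crossing-change freedom recorded in Lemma~\ref{lem:writhe}; thus $D_L$ represents $L$ itself and not merely a projection of it. Counting regions then closes the argument: outside the bundle nothing has changed, and inside every newly bounded region is a quadrilateral, so $p_n(D_L) = p_n(Q)$ for all $n \neq 4$.

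The main obstacle is the region bookkeeping of the second step, and in particular the behaviour at the \emph{interfaces}. One must verify that where the inserted braid meets the top and bottom of the bundle, and where a stretched kink turns to run alongside its neighbours, no stray bigon, triangle, or higher polygon is created or destroyed: a region of the original bundle that gains an edge must be matched so that the count of each $n$-gon with $n \neq 4$ is unaffected. Equally delicate is the parity and padding issue when $N+1$ exceeds $m$, since any leftover trivial strands must be absorbed into $L$ by moves that again touch only quadrilaterals. Confirming that every one of these local pictures contributes exclusively quadrilaterals—so that the clean equality $p_n(D_L) = p_n(Q)$ for $n \neq 4$ genuinely holds—is where the real care is required.
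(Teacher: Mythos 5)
There is a genuine gap, and it sits at the heart of your construction. The device you borrow from Lemma~\ref{lem:2toN} --- creating a kink by a Reidemeister move of type I and stretching it along parallel strands --- is a finger of a \emph{single} strand: the arc leaves a strand, runs alongside its neighbours, turns around, and returns to the \emph{same} strand. It never interchanges two strands, so it cannot realize a braid generator $\sigma_i^{\pm 1}$; the underlying connection pattern of the projection is invariant under kink-creation and stretching. Hence the ``braid $\beta$'' you propose to install inside the bundle by this mechanism is trivial both as a permutation and as a braid, your claim that the permutation underlying $\beta$ fixes the number of components of $L$ fails, and the resulting diagram represents whatever the template already closes up to --- not an arbitrary link $L$. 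In the paper the kinks play a strictly different role: the genuine braiding is inserted by another device, and the kinks only repair region counts afterwards.

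The second, related, gap is that genuine crossings placed inside the bundle cannot ``cost only quadrilaterals.'' A region between two adjacent strands of the bundle is not a quadrilateral: it extends beyond the pictured part of $Q$ and its size is uncontrolled by the hypotheses. Inserting a crossing splits such an $m$-gon (with $m$ unknown) into two regions of uncontrolled sizes, changing $p_n$ for unpredictable $n \neq 4$, and no move confined to the bundle can undo this. That is exactly why the paper does \emph{not} leave the rest of $Q$ untouched: it aligns a closed quasitoric braid diagram $D_1$ of $L$ (every link is a quasitoric braid closure, by Manturov) along a diagram $D_Q$ of the \emph{unknot} with underlying projection $Q$, so that away from the braided part all strands run parallel and all inter-strand regions are quadrilaterals by construction. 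This cabling forces the framing issue your plan never meets: the paper invokes Lemma~\ref{lem:writhe} to take $D_Q$ of writhe $0$ when $Q$ has evenly many crossings, and of writhe $+1$ --- compensated by multiplying the quasitoric braid by the quasitoric $(-1)$-full twist $b_{-1}=(\sigma_1\cdots\sigma_{p-1})^{-p}$ --- when it has oddly many. The finitely many defects that remain ($2q$ shaded regions, each with one extra edge and an adjacent new triangle, as in Figure~\ref{fig:qtb}) are then repaired by $2q$ kinks routed around the whole diagram according to the traversal order $0,\ldots,N$ of Figure~\ref{fig:N}; this routing is where the hypotheses $N > p+q$ and the ordering of the arcs are actually used, neither of which your argument engages. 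In short, your proposal bypasses the writhe discussion only because it omits the cabling step, and it is precisely that step which makes the region bookkeeping, and the identification of the link type, possible.
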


\begin{proof}
Take a closed quasitoric braid diagram $D_1$ of the link $L$, 
where a quasitoric braid is a braid obtained 
by changing some subset of the crossings in a toric braid. 
We note that every link can be realized as the closure of a quasitoric braid \cite{Manturov}. 
When the quasitoric braid diagram $D_1$ for $L$ is of type $(p,q)$, 
we take $N$ so that $N > p+q$ 
and a knot projection $Q$ as in the statement. 
We discuss the cases where the number of crossings of $Q$ is odd and even. \\

\underline{Case $1$.}\ 
Consider the case where the number of crossings of $Q$ is even. 
It follows from Lemma~\ref{lem:writhe} that 
there exists a diagram $D_Q$ of the unknot whose underlying projection is $Q$  
and whose writhe is $0$. 
Align the closed quasitoric braid diagram $D_1$ along with 
the diagram $D_Q$ of the unknot such that quasitoric braid parts are arranged 
as in the top of Figure~\ref{fig:qtb}, where it depicts the case $(p,q)=(3,2)$, 
and denote the diagram obtained from $D_1$ by $D_2$. 
Since the writhe of $D_Q$ is zero, the diagram $D_2$ represents the link $L$. 
When we ignore quadrilaterals, 
regions of $D_2$ are almost identical to those of $Q$. 
The exact differences are as follows: 
each of $2q$ shaded regions of $D_2$ in the top of Figure~\ref{fig:qtb} has one more edge than 
the corresponding region of $Q$, and has a newly created triangle next to it. 
%
Roughly speaking, 
let $D_L$ be a diagram obtained from $D_2$ by creating $2q$ kinks 
as in the middle of Figure~\ref{fig:qtb}, 
and aligning them along with $Q_2$ as in the bottom of Figure~\ref{fig:qtb}. 
During this process, 
we can trace the part of the link with the kink by placing the kink strand 
on top of the strand of the link, the knot type remains the same, 
and the crossing information for the kink is irrelevant.
The precise procedure making $D_L$ from $D_2$ is as follows.  

Create $2q$ kinks for $D_2$ as in the middle of Figure~\ref{fig:qtb} 
such that each kink is obtained by a Reidemeister move of type I along the edge between 
a shaded region and the triangle next to it, 
where the newly created $1$-gon is adjacent to the shaded region.  
Divide the $2q$ kinks into $q$ pairs of two kinks as in the middle of Figure~\ref{fig:qtb} 
such that  
%
\begin{itemize}
\item 
the first pair consists of 
a kink adjacent to the first shaded region from the right on the upper and 
that adjacent to the first shaded region from the left on the lower, 
\item 
the second pair consists of 
a kink adjacent to the second shaded region from the right on the upper and 
that adjacent to the second shaded region from the left on the lower, 
\item[] \quad $\vdots$
\item
the $(q-1)$-st pair consists of 
a kink adjacent to the $(q-1)$-st shaded region from the right on the upper and 
that adjacent to the $(q-1)$-st shaded region from the left on the lower, 
\item[] \quad and 
\item
the $q$-th pair consists of 
a kink adjacent to the $q$-th shaded region from the right on the upper and 
that adjacent to the $q$-th shaded region from the left on the lower. 
\end{itemize}
%
Align each of the $2q$ kinks 
along with $D_Q$ as in the bottom of Figure~\ref{fig:qtb}  
according to the order such that 
%
\begin{itemize}
\item 
%
the upper kink of the first pair 
leaves from $0$, go through from $1$ to $q-1$, and returns to $q$, 
and 
%
the lower kink of the first pair 
leaves from $2$, go through from $3$ to $q+1$, and returns to $0$, 
\item 
%
the upper kink of the second pair 
leaves from $0$, go through from $1$ to $q-2$, and returns to $q-1$,  
and 
%
the lower kink of the second pair 
leaves from $3$, go through from $4$ to $q+1$, and returns to $0$, 
\item[] \quad $\vdots$
\item
%
the upper link of the $(q-1)$-st pair
leaves from $0$, go through $1$, and returns to $2$, 
and  
%
the lower link of the $(q-1)$-st pair
leaves from $q$, go through $q+1$, and returns to $0$,  
\item[] \quad and 
\item
%
the upper link of the $q$-th pair
leaves from $0$ and returns to $1$, 
and  
%
the lower link of the $q$-th pair
leaves from $q+1$ and returns to $0$, 
\end{itemize}
%
where each number from $0$ to $q+1$ indicates a set of parallel strands of $D_2$ 
corresponding to the strand labelled by the number in Figure~\ref{fig:N}. 

Then we actually obtain the new diagram $D_L$ from $D_2$, where 
crossing information concerning aligned kinks of $D_L$ can be suitably chosen 
such that $D_L$ represents the link $L$, in other words, 
such that 
all aligned kinks of $D_L$ can be shrunk back to the original positions in $D_2$. 
%
It follows from the construction that 
$p_n(D_L) = p_n(Q)$ for all $n \neq 4$, 
and hence the diagram $D_L$ is a desired one. \\

\begin{figure}[thbp]
\includegraphics[width=0.90\textwidth, pagebox=artbox]{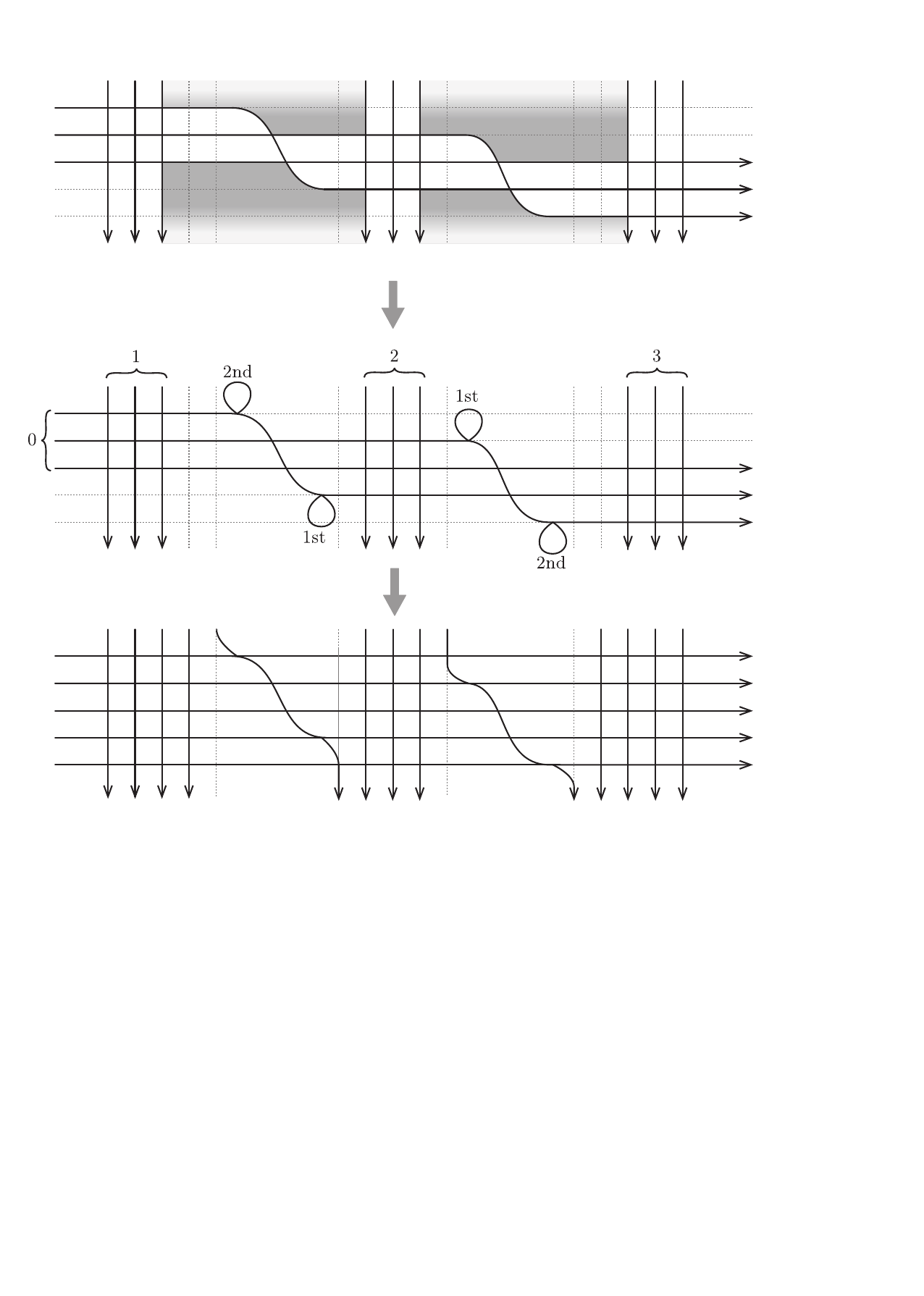}
\caption{How to make $D$ from $D_1$ for the case $(p,q)=(3,2)$}\label{fig:qtb}
\end{figure}

\underline{Case $2$.}
Consider the case where the number of crossings of $Q$ is odd. 
It follows from Lemma~\ref{lem:writhe} that 
there exists a diagram $D_Q$ of the unknot whose underlying projection is $Q$  
and whose writhe is $+1$. 
Suppose that the quasitoric braid diagram $D$ of the link $L$ is the closure of a quasitoric braid $b_L$. 
Then take a closed quasitoric braid diagram $D'_1$ as the closure of 
the product of the two quasitoric braid $b_L$ and $b_{-1}=(\sigma_1 \sigma_2 \cdots \sigma_{p-1})^{-p}$. 
We note that the product of two quasitoric braid is also quaitoric \cite{Manturov}. 
We remark here that $D'_1$ does not represent $L$, since the writhe of $D_Q$ is $+1$. 
Align $D'_1$ along with $D_Q$ such that quasitoric braid parts are arranged 
as in the top of Figure~\ref{fig:qtb}, and denote the diagram obtained from $D'_1$ by $D_2$. 
Since the writhe of $D_Q$ is $+1$ and the (quasitoric) braid $b_{-1}$ represents the $(-1)$-full-twist, 
the diagram $D_2$ represents the link $L$. 
The rest of the proof is the same as for Case 1 and is therefore omitted.
\end{proof}

\begin{proof}[Proof of Theorem~\ref{thm:template}]
It directly follows from Lemma~\ref{lem:2toN} and Proposition~\ref{prop:template}. 
\end{proof}

\section{Universal sequences}\label{sec:universal}

We discuss the relationship with the notion of universal sequences \cite{AST} introduce 
by the authors and Adams.  
A strictly increasing sequence of integers $(a_1,a_2,a_3,\ldots)$ with $a_1 \geq 2$ 
is said to be \textit{realized} by a link if there exists a diagram for the link 
such that each complementary region is an $a_n$-gon for some $a_n$ that 
appears in the sequence. We note that not every $a_n$ must be realized by a region. 
We say that a sequence is \textit{universal}\footnote{
Although the term ``universal for knots and links'' is used in \cite{AST}, 
the term ``universal'' is simply used in this paper.
}\ 
if every link has a diagram 
realizing the sequence. In \cite{AST}, the following were shown: 
\begin{itemize} 
\item $(n,2n,3n,\ldots)$ is not universal for any $n \geq 2$ (\cite[Theorem 2.3]{AST}), 
\item $(3,5,7,\ldots)$ is universal (\cite[Theorem 3.1]{AST}),  
\item $(3,n,n+1,n+2,\ldots)$ is universal for any $n \geq 4$ (\cite[Theorem 3.1]{AST}),   
\item $(2,n,n+1,n+2,\ldots)$ is universal for any $n \geq 3$ (\cite[Theorem 3.1]{AST}), 
\item $(3,4,n)$ is universal for any $n \geq 5$ (\cite[Theorem 3.2 and 3.3]{AST}), and  
\item $(2,4,5)$ is universal (\cite[Theorem 3.4]{AST}).  
\end{itemize} 

Using Theorem~\ref{thm:template}, we can extend the last two results 
and thus give alternative proofs for them. 
Note that Theorem~\ref{thm:(3,4)} implies the fifth result above. 

\begin{theorem}\label{thm:(3,4)}
The sequence $(3,4)$ is universal. 
\end{theorem}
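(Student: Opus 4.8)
The goal is to show that the sequence $(3,4)$ is universal, i.e.\ that every link admits a diagram in which every complementary region is a triangle or a quadrilateral. My plan is to reduce this immediately to Corollary~\ref{cor:(3,4)}, which is itself a consequence of Theorem~\ref{thm:main} (and ultimately of Theorem~\ref{thm:template}). The key observation is that the defining condition of universality for the sequence $(3,4)$ is precisely the requirement that $p_n(D_L)=0$ for every $n \notin \{3,4\}$; the values of $p_3$ and $p_4$ are left unconstrained, since universality only asks that each region be an $a_n$-gon for \emph{some} $a_n$ in the sequence and does not require every $a_n$ to be realized.

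First I would check that the target sequence satisfies the hypotheses of Theorem~\ref{thm:main}. Taking $p_2 = 0$, $p_3 = 8$, and $p_n = 0$ for all $n \geq 5$, one verifies that Equation~\eqref{eq:Euler} holds, since the left-hand side becomes $2\cdot 0 + 8 = 8$ and the right-hand side is $8$ (all the $p_n$ with $n \geq 5$ vanishing). This is exactly the sequence identified in the passage preceding Corollary~\ref{cor:(3,4)}, so the hypotheses are met and $p_2 = 0$ in particular is satisfied.

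Next, applying Theorem~\ref{thm:main} (equivalently, Corollary~\ref{cor:(3,4)}) to an arbitrary link $L$, I obtain a diagram $D_L$ of $L$ with $p_n(D_L) = p_n$ for all $n \neq 4$. By the choice of sequence this gives $p_2(D_L) = 0$ and $p_n(D_L) = 0$ for all $n \geq 5$, while $p_4(D_L)$ may be any nonnegative integer determined by the construction. Hence every complementary region of $D_L$ is either a triangle or a quadrilateral, and no bigons or larger regions occur. Since $L$ was arbitrary, every link has such a diagram, so the diagram $D_L$ realizes the sequence $(3,4)$ in the sense of the definition above, and therefore $(3,4)$ is universal.

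There is essentially no hard step here, as the whole force of the statement has already been absorbed into Theorem~\ref{thm:main}; the only point requiring care is the bookkeeping of which indices are constrained. The main obstacle, if any, is conceptual rather than technical: one must be careful to note that universality permits $p_4$ to be nonzero (indeed possibly large) without violating the condition, precisely because $4$ appears in the sequence $(3,4)$, and to confirm that the absence of a constraint on $p_4$ in Theorem~\ref{thm:main} dovetails exactly with the freedom allowed in the definition of realizing a sequence.
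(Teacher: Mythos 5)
Your proof is correct and matches the paper's argument exactly: the paper likewise proves Theorem~\ref{thm:(3,4)} by observing it is a paraphrase of Corollary~\ref{cor:(3,4)}, which in turn follows from Theorem~\ref{thm:main} applied to the sequence $p_2=0$, $p_3=8$, $p_n=0$ ($n\geq 5$). Your additional bookkeeping --- verifying Equation~\eqref{eq:Euler} and noting that the definition of realizing a sequence leaves $p_4$ unconstrained --- is exactly the (implicit) content of the paper's one-line proof.
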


\begin{proof}
This is a paraphrased assertion of Corollary~\ref{cor:(3,4)}. 
\end{proof}

\begin{theorem}
The sequence $(2,4,2k+1)$ is universal for any $k \geq 2$. 
\end{theorem}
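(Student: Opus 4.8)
The plan is to reduce everything to Theorem~\ref{thm:template} by producing a single suitable knot projection. Universality of $(2,4,2k+1)$ means that every link $L$ admits a diagram all of whose complementary regions are bigons, quadrilaterals, or $(2k+1)$-gons. By Theorem~\ref{thm:template}, if I can exhibit \emph{one} knot projection $P$ carrying the three-strand part of Figure~\ref{fig:2} and satisfying $p_n(P)=0$ for every $n \notin \{2,4,2k+1\}$, then for an arbitrary link $L$ I obtain a diagram $D_L$ with $p_n(D_L)=p_n(P)$ for all $n\neq 4$; since $p_4$ is unconstrained and quadrilaterals are permitted, every region of $D_L$ is then a bigon, a quadrilateral, or a $(2k+1)$-gon, which is exactly what universality of $(2,4,2k+1)$ demands. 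So the whole problem collapses to building $P$.

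First I would pin down the admissible region counts. Imposing $p_3(P)=0$ and $p_n(P)=0$ for all $n\geq 5$ except $n=2k+1$, Equation~\eqref{eq:Euler} reads $2p_2 = 8 + (2k-3)\,p_{2k+1}$. Since $2k-3$ is odd, this forces $p_{2k+1}$ to be even, and the economical choice is $p_{2k+1}=2$, giving $p_2 = 2k+1$. The key observation is that this profile is realized verbatim by the standard closed $2$-braid projection of the $(2,2k+1)$-torus knot: its $2k+1$ crossings cut out $2k+1$ bigons between consecutive crossings together with two $(2k+1)$-gons (the inner and outer regions), and no others. Because $2k+1$ is odd this is a genuine reduced connected knot projection, and one checks the Euler count $V=2k+1$, $E=4k+2$, $F=2k+3$ against $p_2 = 2k+1$, $p_{2k+1}=2$.

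It then remains to plant the part shown in Figure~\ref{fig:2} into this projection. I would do so by a local modification of the torus projection, routing a short third strand (a portion of the same circle) through one of the bigon regions so as to reproduce the three-strand template, arranged so that the only new regions created are bigons and quadrilaterals. Both of these sizes already lie in $\{2,4\}$, so $p_n(P)=0$ continues to hold for every $n \notin \{2,4,2k+1\}$, while $P$ now carries the required template and stays a single immersed circle. Applying Theorem~\ref{thm:template} to this $P$ finishes the proof; the case $k=2$ recovers the known universality of $(2,4,5)$.

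The step I expect to be delicate is this last planting operation: I must insert a copy of the Figure~\ref{fig:2} template without ever creating a triangle or an odd-gon of size $\neq 2k+1$, and without disconnecting the underlying curve. Unlike the earlier Proposition, where triangles were allowed, here the target sequence forbids $p_3$, so the local picture used to install the template must be triangle-free; making this explicit (most cleanly by an accompanying figure) is the real content of the argument, everything else being a direct appeal to Theorem~\ref{thm:template} and the Euler relation.
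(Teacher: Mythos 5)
Your top-level reduction is exactly the paper's: both proofs consist of exhibiting a single knot projection that contains the three-strand part of Figure~\ref{fig:2} and whose regions, quadrilaterals aside, are only bigons and $(2k+1)$-gons, and then invoking Theorem~\ref{thm:template}. Your Euler arithmetic is also fine (the paper's own projection $P_k$ uses the profile $p_2 = 4k-2$, $p_{2k+1}=4$ rather than your minimal $p_2 = 2k+1$, $p_{2k+1}=2$, but either profile satisfies Equation~\eqref{eq:Euler}, and since universality only constrains which region sizes occur, not their multiplicities, the choice of profile is immaterial). The problem is that the one step you defer --- ``planting'' the template into the standard closed $2$-braid projection of the $(2,2k+1)$-torus knot --- is not a routine verification to be postponed to a figure: it is the entire content of the proof. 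In the paper this step \emph{is} the proof; the argument is literally ``take $P_k$ as in Figure~\ref{fig:(2,4,odd)},'' i.e., an explicit projection that already carries the template with the correct region profile. You have replaced the explicit object by a promise to construct one, so as written the proof has a genuine gap.

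Worse, the specific move you sketch does not work. If a third strand is routed \emph{through} a bigon region, entering through one of its two edges and leaving through the other, the bigon is cut into two triangles (each new region is bounded by a piece of each old edge plus the transversal arc), and $p_3 = 0$ is destroyed. If instead the strand enters and exits through the same edge, you get a bigon plus a quadrilateral, but then the strand does not traverse the region, so this cannot by itself produce three parallel strands crossing a disk. Nor are triangles easy to repair afterwards: crossing a triangle edge-to-adjacent-edge leaves a smaller corner triangle, and crossing it edge-to-same-edge produces a bigon plus a pentagon, which is forbidden for every $k \geq 3$. So any correct installation of the template must be globally routed so that transversal passages happen only through quadrilaterals (which split into two quadrilaterals) or through regions whose size changes you can absorb into the allowed set $\{2,4,2k+1\}$ --- this is precisely the kind of careful routing the paper's constructions (the parallelization and kink-alignment in Lemma~\ref{lem:2toN} and Proposition~\ref{prop:template}, and the explicit $P_k$ of Figure~\ref{fig:(2,4,odd)}) are designed to control. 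Until you exhibit the modified projection explicitly and verify its regions, the statement is not proved; your reduction to Theorem~\ref{thm:template} is correct but carries no weight without it.
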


\begin{proof}
Take a knot projection $P_k$ as in the left of Figure~\ref{fig:(2,4,odd)} for each $k \geq 2$ 
so that 
\[ p_2(P_k) = 4k-2,\ p_{2k+1}(P_k) = 4 \ \ \text{and} \ \  p_n(P_k) = 0 \ (n \neq 2,2k+1) . \]
Since  $P_k$ has the part as shown in Figure~\ref{fig:2},  
Theorem~\ref{thm:template} can be applied for the projection $P_k$, 
and hence any link $L$ has a diagram $D_L$ such that $p_n(D_L) = p_n(P_k)$ for all $n \neq 4$. 
It follows from $p_n(P_k) = 0 $ for all $n \neq 2,2k+1$ that $p_n(D_L) = 0$ for all $n \neq 2,4,2k+1$. 
This implies that the sequence $(2,4,2k+1)$ is universal.  
\end{proof}

\begin{figure}[thbp]
\includegraphics[width=0.95\textwidth, pagebox=artbox]{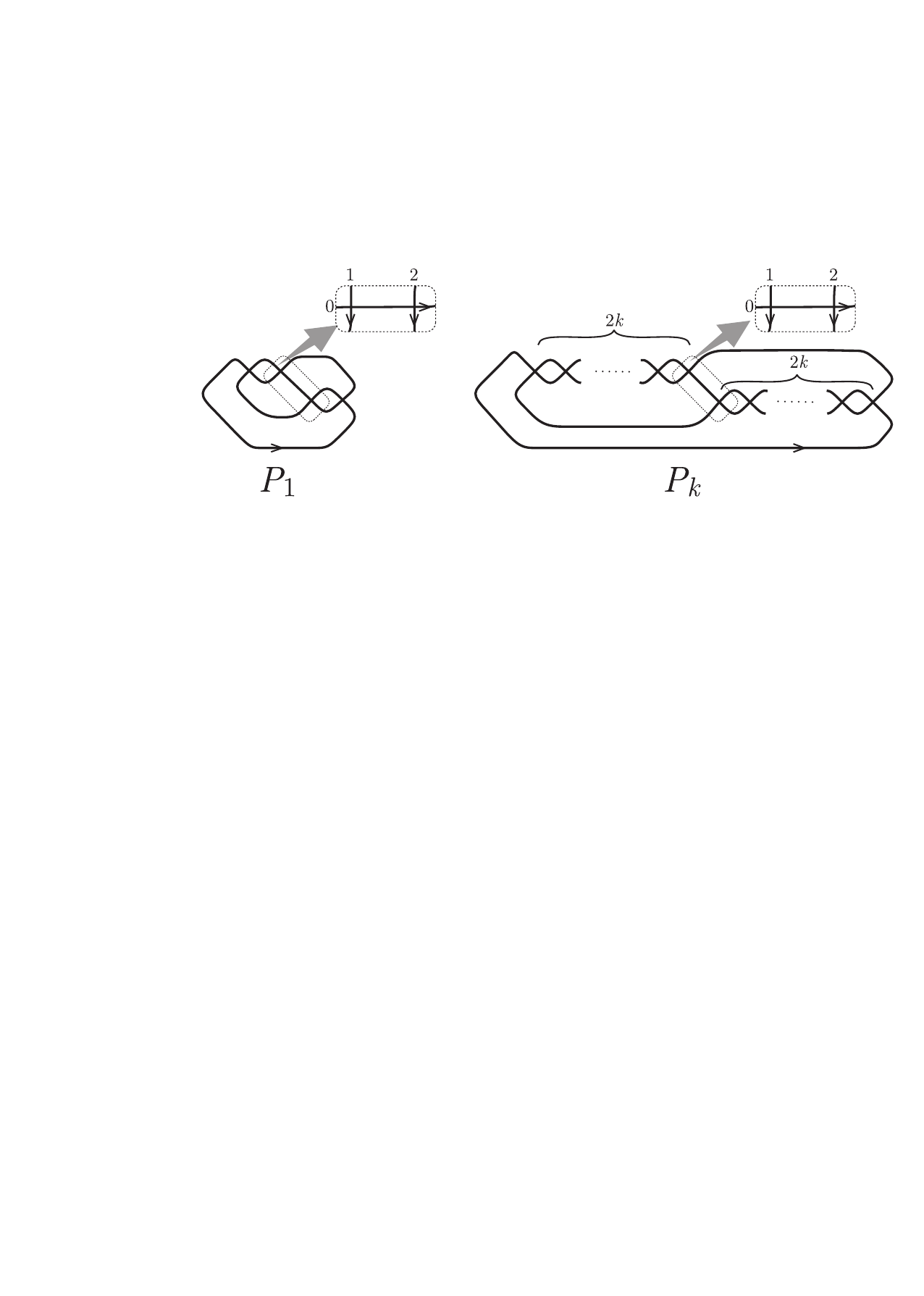}
\caption{The knot projections $P_2$ and $P_k$ ($k \geq 2$)}\label{fig:(2,4,odd)}
\end{figure}



\section*{Acknowledgments}
The authors would like to thank 
Colin Adams 
for helpful comments on the first draft of this paper.  
The second-named author has been supported in part by 
the Grant-in-Aid for Scientific Research (C), (No.~JP17K05242, No.~JP21K03220), 
Japan Society for the Promotion of Science.


\bibliographystyle{amsplain}
\bibliography{reference}

\end{document}